\documentclass[a4paper,11pt]{amsart}
\usepackage[top=1.25in, bottom=1.25in, left=1.25in, right=1.25in]{geometry}
\usepackage{graphicx}
\usepackage{amsfonts}
\usepackage{epsf}
\usepackage{amssymb}
\usepackage{amsmath}
\usepackage{amscd}
\usepackage{amsthm}
\usepackage{tikz}
\usetikzlibrary{
  cd,
  calc,
  positioning,
  arrows,
  decorations.pathreplacing,
  decorations.markings,
}
\usepackage{pdfpages}
\usepackage{fancyhdr}
\usepackage{setspace}
\usepackage{hyperref}
\usepackage[all]{xy}
\usetikzlibrary{matrix}
\usepackage{verbatim}
\usepackage{enumerate}
\usepackage{mathrsfs}
\usepackage{pinlabel}
\usepackage{lscape}
\usepackage{color}

\newtheorem{theorem}{Theorem}[section]

\newtheorem{proposition}[theorem]{Proposition}

\theoremstyle{definition}

\newtheorem{remark}[theorem]{Remark}

\newtheorem*{case2'}{Case 2$'$}
\newtheorem{theorem-named}{}
\newtheorem{theorem-labeled}{Theorem}
\newtheorem{definition-named}{}
\newtheorem{conjecture-named}{}
\newtheorem{case-named}{}

\numberwithin{equation}{section}

\newcommand{\tors}{{\rm Tors}}

\newcommand{\Z}{\mathbb{Z}}

\def\Z{\mathbb{Z}}

\newcommand{\Id}{\operatorname{Id}}

\newcommand{\rk}{\operatorname{rk}}

\makeatletter


\begin{document}
\title{Embedding lens spaces in definite 4-manifolds}

\author{Paolo Aceto}
\address{Mathematical Institute University of Oxford, Oxford, United Kingdom}
\email{paoloaceto@gmail.com}
\urladdr{www.maths.ox.ac.uk/people/paolo.aceto}
\author{JungHwan Park}
\address{School of Mathematics, Georgia Institute of Technology, Atlanta, GA, USA}
\email{junghwan.park@math.gatech.edu }
\urladdr{people.math.gatech.edu/$\sim$jpark929/}
\def\subjclassname{\textup{2010} Mathematics Subject Classification}
\expandafter\let\csname subjclassname@1991\endcsname=\subjclassname
\expandafter\let\csname subjclassname@2000\endcsname=\subjclassname
\subjclass{57M27, 57N13, 57N35.
}

\begin{abstract}
Every lens space has a locally flat embedding in a connected sum of 8 copies of the complex projective plane and a smooth embedding in $n$ copies of the complex projective plane for some positive integer $n$. We show that there is no $n$ such that every lens space smoothly embeds in $n$ copies of the complex projective plane.
\end{abstract}

\maketitle

\section{Introduction}\label{sec:intro}

Every closed $3$-manifold embeds in $S^5$ \cite{Hirsch:1961-1,Rohlin:1965-1,Wall:1965-1}. On the other hand, there are  strong restrictions on which closed $3$-manifolds can embed in $S^4$. It was shown by Hantzsche \cite{Hantzsche:1938-1} that if a rational homology $3$-sphere $Y$ embeds in $S^4$, then $H_1(Y;\mathbb{Z}) \cong G \oplus G$. In particular, no lens space (other than $S^3$ and $S^1 \times S^2$) embed in $S^4$. Further, a punctured lens space  $L(p,q)_0$ admits an embedding in $S^4$ if and only if $p$ is odd \cite{Zeeman:1965-1,Epstein:1965-1}. Also, we have a complete understanding of which connected sums of lens spaces can be smoothly embedded in $S^4$ by Donald \cite{Donald:2015-1} (see also \cite{Kawauchi-Kojima:1980-1,Gilmer-Livingston:1983-1,Fintushel-Stern:1987-1}). There are also various interesting results on embedding other $3$-manifolds in $S^4$ \cite{Kawauchi:1977-1, Casson-Harer:1981-1, Hillman:1996-1, Crisp-Hillman:1998-1, Budney-Burton:2008-1,Hillman:2009-1,Issa-McCoy:2018-1}.

Even though the embedding problem of lens spaces in $S^4$ is completely solved, there are many interesting generalizations. In this paper, we focus on the embedding problem of lens spaces in definite $4$-manifolds (the embedding problem of lens spaces in spin $4$-manifolds has been studied in \cite{Aceto-Golla-Larson:2017-1}). In  \cite{Edmonds-Livingston:1996-1}, Edmonds and Livingston showed that every lens space smoothly embeds in $\#_n\mathbb{C}\mathbb{P}^2$ for some positive integer $n$. Further, they showed that there is a family of lens spaces that do not have locally flat embeddings in $\#_4\mathbb{C}\mathbb{P}^2$. Later, Edmonds \cite{Edmonds:2005-1} showed that every lens space has a locally flat embedding in $\#_8\mathbb{C}\mathbb{P}^2$ using independent works of Boyer \cite{Boyer:1993-1} and Stong \cite{Stong:1993-1} which extend Freedman’s \cite{Freedman:1982-1} realization result. In contrast, we show that there is no $n$ such that every lens space smoothly embeds in $\#_n\mathbb{C}\mathbb{P}^2$. Our main argument relies on  Donaldson's diagonalization theorem~\cite{Donaldson:1987-1} and is based on the combinatorics of integral lattices.

\begin{theorem}\label{thm:main} Let $L(p,q)$ be a lens space bounded by the canonical positive definite plumbed manifold $P(p,q)$ with the plumbing graph
$$
\begin{tikzpicture}[xscale=1.5,yscale=-0.5]
\node (A0_1) at (1, 0) {$a_1$};
\node (A0_2) at (2, 0) {$a_2$};
\node (A0_4) at (4, 0) {$a_m$};

\node (A1_1) at (1, 1) {$\bullet$};
\node (A1_2) at (2, 1) {$\bullet$};
\node (A1_3) at (3, 1) {$\dots\dots$};
\node (A1_4) at (4, 1) {$\bullet$};
\path (A1_2) edge [thick,-] node [auto] {$\scriptstyle{}$} (A1_3);
\path (A1_3) edge [thick,-] node [auto] {$\scriptstyle{}$} (A1_4);
\path (A1_1) edge [thick,-] node [auto] {$\scriptstyle{}$} (A1_2);
\end{tikzpicture}
$$
Suppose that $a_i \geq 6 $ for all $i$. If $L(p,q)$ smoothly embeds in a definite $4$-manifold $W$ with $b_1(W)=0$, then $b_2(W)> m$. In particular, if $L(p,q)$ smoothly embeds in $\#_n\mathbb{C}\mathbb{P}^2$, then $n > m$.\end{theorem}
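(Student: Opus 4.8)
The plan is to convert the smooth embedding into a constraint on the standard integral lattice via Donaldson's theorem, and then to read off the bound from the combinatorics of $\Lambda(p,q)$. After reversing orientation if necessary I may assume $W$ is connected and positive definite. Since $b_1(W)=0$ and $L(p,q)$ is a rational homology sphere, its fundamental class is rationally null-homologous, so $L(p,q)$ separates $W$ as $W=W_1\cup_{L(p,q)}W_2$ with $\partial W_2=-L(p,q)$; a Mayer--Vietoris computation over $\Q$ gives $b_2(W)=b_2(W_1)+b_2(W_2)$, so it suffices to bound $b_2(W_2)$ from below. The plumbing $P(p,q)$ is positive definite with $\partial P(p,q)=L(p,q)$ and $b_2(P(p,q))=m$, so $X:=W_2\cup_{L(p,q)}P(p,q)$ is a closed oriented $4$-manifold built from two positive definite pieces glued along a rational homology sphere; by Novikov additivity it is again positive definite, with $b_2(X)=b_2(W_2)+m=:N$. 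Donaldson's theorem identifies its intersection form with the standard $\Z^{N}$, and the inclusion $P(p,q)\hookrightarrow X$ induces an embedding of lattices $\Lambda(p,q)\hookrightarrow\Z^{N}$ that preserves the forms (injectivity is automatic because $P(p,q)$ is definite).

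The heart of the argument is the lattice-theoretic claim that such an embedding forces $N\ge 2m+1$; granting it, $b_2(W)\ge b_2(W_2)=N-m\ge m+1>m$, and specializing to $W=\#_n\mathbb{C}\mathbb{P}^2$ (positive definite, $b_2=n$, $b_1=0$) yields $n>m$. Write $v_1,\dots,v_m$ for the images of the standard generators of $\Lambda(p,q)$, so $v_i\cdot v_i=a_i\ge 6$, $v_i\cdot v_{i+1}=\pm 1$, and $v_i\cdot v_j=0$ for $|i-j|\ge 2$. An essential preliminary observation is that the embedding should be controlled to be \emph{primitive}: the orthogonal complement $\Lambda(p,q)^\perp$ must carry the linking form of $L(p,q)$, which has order exactly $p=\det\Lambda(p,q)$, and this pins the saturation of $\Lambda(p,q)$ in $\Z^{N}$ down to $\Lambda(p,q)$ itself. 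This step is indispensable because it rules out efficient non-primitive embeddings such as $\Lambda(9,9)\hookrightarrow\Z^{3}$ (given by $(2,2,1)$ and $(2,-2,1)$), whose complement $\langle 5\rangle$ is far too small to present the order-$80$ linking form of the corresponding lens space.

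The main obstacle I expect is proving the primitive bound $N\ge 2m+1$, and this is exactly where the hypothesis $a_i\ge 6$ enters. The strategy I would pursue is a support-counting argument on the coordinates $(v_{i,1},\dots,v_{i,N})$: since each $v_i$ has norm at least $6$ while consecutive generators overlap in only a single unit of pairing and non-consecutive generators are orthogonal, the $v_i$ cannot reuse coordinates efficiently, and one aims to exhibit at least $2m+1$ distinct standard basis directions that are forced to appear. The threshold $6$ is what guarantees that each generator contributes genuinely new directions beyond the ones it shares with its two neighbours; for small weights (e.g.\ the $A$-type chains with $a_i=2$, where $\Lambda\hookrightarrow\Z^{m+1}$) the analogous count collapses to $N=m+1$, so the packing inefficiency created by $a_i\ge 6$ is genuinely needed. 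The two most delicate points I anticipate are (i) making the coordinate bookkeeping uniform across all sign and overlap patterns of the $v_i$, and (ii) discharging the primitivity reduction when $H_1(W_2)\ne 0$, in which case the complement's discriminant form has order a proper divisor of $p$; eliminating the resulting efficient embeddings requires matching the linking form of $L(p,q)$ explicitly, or equivalently running the symmetric argument on the dual plumbing $P(p,p-q)$ (whose weights are the Riemenschneider dual of the $a_i$) and combining the two estimates.
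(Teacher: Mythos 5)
Your global setup (pass to a positive definite $W$, separate along $L(p,q)$, cap one side with a plumbing, apply Donaldson) matches the paper's, but you cap the side bounding $-L(p,q)$ with $P(p,q)$ and therefore must control embeddings of the linear lattice with weights $a_i\ge 6$, whereas the paper caps the side bounding $L(p,q)$ with the dual plumbing $P(p,p-q)$ and controls embeddings of the lattice of $2$'s and $3$'s. This difference is not cosmetic: the entire weight of your proof rests on the claim that a (primitive) embedding of your lattice into $(\Z^N,\Id)$ forces $N\ge 2m+1$, and you do not prove it --- you only describe a ``support-counting'' strategy you would pursue. Worse, the claim is false even with the primitivity hypothesis you add. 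For $m=2$ and $a_1=a_2=6$, the vectors $v_1=(2,1,1,0)$ and $v_2=(1,-1,0,2)$ in $\Z^4$ satisfy $v_1\cdot v_1=v_2\cdot v_2=6$ and $v_1\cdot v_2=1$, and they span a primitive sublattice (the $2\times 2$ minor on the second and third coordinates equals $1$), so the linear lattice $\sbmatrix{6&1\\1&6}$ embeds primitively in $\Z^{2m}$. Your proposed reinforcement via the linking form cannot rescue this: for a primitive sublattice of a unimodular lattice, the discriminant form of the orthogonal complement is automatically the negative of that of the sublattice, so the complement of this embedding presents the correct group of order $35$ and neither of your two stated constraints excludes $N=2m$.

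The paper avoids this problem entirely. Its Proposition 2.1 shows that any embedding of $(\Z\Gamma_{p,p-q},Q_{p,p-q})$ into $(\Z^N,\Id)$ has corank at least $m$, because a chain of $2$'s of length at least $3$ embeds uniquely as $x_i=e_i+e_{i+1}$ and each weight-$3$ vertex is forced to consume one fresh coordinate; hence every positive definite filling of $L(p,q)$ has $b_2\ge m$ (Proposition 2.3). The side bounding $-L(p,q)$ then only needs to contribute $b_2\ge 1$, which follows from the fact that $L(p,q)$ bounds no rational homology ball (Lisca). In other words, the ``symmetric argument on the dual plumbing'' that you relegate to a final parenthetical is not an optional variant but the only version of the lattice argument that actually closes; the rigidity driving it comes from the long chains of $2$'s in $\Gamma_{p,p-q}$ (equivalently, from $a_i\ge 6$ on the dual side), not from a packing inefficiency of large-norm vectors. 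To repair your write-up you would need either a correct lower bound for embeddings of the weight-$\ge 6$ linear lattice using genuinely more than primitivity and the discriminant form, or to switch to the dual lattice as the paper does.
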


Furthermore, we show that if a lens space $L(p,q)$ as in Theorem~\ref{thm:main} bounds a smooth, positive definite $4$-manifold, there is a strong restriction on its intersection form. This also reflects the big discrepancy between  the smooth and the topological category in dimension $4$ since for every $3$-manifold $Y$ and a $\mathbb{Z}$-valued symmetric bilinear form $Q$ that presents the linking form of $Y$, $Q$ is realized as the intersection form of a simply connected, topological $4$-manifold bounded by $Y$ \cite{Boyer:1993-1, Stong:1993-1}. For instance, every lens space bounds a simply connected, positive definite, topological $4$-manifold $X$ with $b_2(X) \leq 6$ \cite{Edmonds:2005-1}. Recall that an \emph{integral lattice} is a pair $(G,Q)$, where $G$ is a finitely generated free abelian group and $Q$ is a $\mathbb{Z}$-valued symmetric bilinear form defined on $G$. The integral lattice with the standard positive definite form is denoted by $(\mathbb{Z}^N, \Id)$. A \emph{morphism of integral lattices} is a homomorphism of abelian groups which preserves the form. An \emph{embedding} is an injective morphism. To a given $4$-manifold $X$, we associate the integral lattice $(H_2(X;\mathbb{Z})/\tors,Q_X)$, where  $Q_X$ is the intersection form on $X$.

\begin{theorem}\label{thm:2} Let $L(p,q)$ be a lens space that satisfies the assumption of Theorem~\ref{thm:main}. If $L(p,q)$ bounds a smooth, positive definite $4$-manifold $X$, then $b_2(X)\geq m$ and there is an embedding $$
(H_2(X;\mathbb{Z})/\tors,Q_X) \hookrightarrow (H_2(P(p,q);\mathbb{Z}),Q_{P(p,q)}) \oplus (\mathbb{Z}^{b_2(X)- m}, \Id).
$$
Moreover, $b_2(X) = m$ if and only if $X$ and $P(p,q)$ have isomorphic intersection forms.
\end{theorem}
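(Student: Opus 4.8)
The plan is to cap off $X$ to a closed positive definite $4$-manifold, apply Donaldson's diagonalization theorem, and then read off the stated embedding from a combinatorial analysis of how the plumbing lattice sits inside a standard diagonal lattice. Write $L(p,q)=\partial X$ and let $P'$ be the canonical positive definite plumbing with $\partial P'=-L(p,q)=L(p,p-q)$, say with $b_2(P')=m'$; its intersection lattice is the linear (tridiagonal) lattice dual to $Q_{P(p,q)}$ in the Riemenschneider sense. First I would form the closed oriented manifold $Z=X\cup_{L(p,q)}P'$. Since $L(p,q)$ is a rational homology sphere and both pieces are positive definite, $Z$ is a closed positive definite $4$-manifold with $b_2(Z)=b_2(X)+m'$, and a Mayer--Vietoris argument shows that the two inclusions induce an \emph{orthogonal} isometric embedding
\[
(H_2(X;\Z)/\tors,\,Q_X)\oplus (H_2(P';\Z),\,Q_{P'})\hookrightarrow (H_2(Z;\Z)/\tors,\,Q_Z),
\]
the two summands being orthogonal because classes from $X$ and from $P'$ are represented by disjoint surfaces.

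By Donaldson's theorem the target is standard, $(H_2(Z;\Z)/\tors,Q_Z)\cong(\Z^{b_2(X)+m'},\Id)$. Composing, I get an orthogonal embedding of $(H_2(X;\Z)/\tors,Q_X)\oplus(H_2(P';\Z),Q_{P'})$ into $(\Z^{b_2(X)+m'},\Id)$; in particular $(H_2(X;\Z)/\tors,Q_X)$ is contained in the orthogonal complement of $(H_2(P';\Z),Q_{P'})$ inside $\Z^{b_2(X)+m'}$. The whole problem is thereby reduced to understanding that orthogonal complement, and this is the only place the hypothesis $a_i\ge 6$ enters.

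The main step, and the one I expect to be the real obstacle, is the lattice lemma: for \emph{any} embedding of the dual plumbing lattice $(H_2(P';\Z),Q_{P'})$ into a standard diagonal lattice $(\Z^N,\Id)$, its orthogonal complement embeds into $(H_2(P(p,q);\Z),Q_{P(p,q)})\oplus(\Z^{N-m'-m},\Id)$; in particular $N\ge m'+m$. The model case is $q=1$, where $Q_{P(p,q)}=\langle p\rangle$, the dual lattice is $A_{p-1}$, and the orthogonal complement of the standard copy of $A_{p-1}$ in $\Z^{p}$ is generated by $(1,\dots,1)$, a vector of square $p$, recovering $\langle p\rangle$. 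In general one must show that every embedding of $Q_{P'}$ into $\Z^N$ is forced, up to the obvious symmetries, into a normal form whose complement is controlled by $Q_{P(p,q)}$; the condition $a_i\ge 6$ is precisely the largeness hypothesis that rigidifies the embedding of $Q_{P'}$ (in the spirit of Lisca's and Greene's analyses of linear lattices assembled from long strings of $(-2)$'s together with a few heavy vertices) so that no extra flexibility can appear in the complement. Granting this lemma with $N=b_2(X)+m'$ yields simultaneously $b_2(X)\ge m$ and, since $Q_X$ lies in $Q_{P'}^{\perp}$, the desired embedding $(H_2(X;\Z)/\tors,Q_X)\hookrightarrow (H_2(P(p,q);\Z),Q_{P(p,q)})\oplus(\Z^{b_2(X)-m},\Id)$.

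For the ``moreover'' statement, one direction is immediate: isomorphic intersection forms have equal rank, so $b_2(X)=m$. Conversely, if $b_2(X)=m$ then the embedding just produced is a full-rank embedding $(H_2(X;\Z)/\tors,Q_X)\hookrightarrow(H_2(P(p,q);\Z),Q_{P(p,q)})$ of positive definite lattices of the same rank $m$. Both lattices have determinant $p=|H_1(L(p,q);\Z)|$, since $X$ and $P(p,q)$ are positive definite fillings of $L(p,q)$, so the index of the sublattice is $1$ and the embedding is an isomorphism, i.e.\ $X$ and $P(p,q)$ have isomorphic intersection forms. The one point I would check carefully here is the determinant bookkeeping in the presence of possible torsion in $H_1(X;\Z)$ and $H_2(X;\Z)$, as this is where the clean index-one conclusion could a priori slip, and where the rigidity furnished by the lattice lemma may again be required to close the argument.
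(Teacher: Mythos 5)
Your overall architecture coincides with the paper's: form $Z=X\cup_{L(p,q)}P(p,p-q)$, apply Donaldson to get an orthogonal embedding of $Q_X\oplus Q_{P(p,p-q)}$ into the diagonal lattice, and extract the theorem from a structural result about the orthogonal complement of the dual plumbing lattice in $(\Z^N,\Id)$. However, the statement you isolate as ``the lattice lemma'' and then explicitly grant yourself is the entire technical content of the argument (the paper's Propositions~\ref{prop:dualembedding} and~\ref{prop:ortho}, together with Remark~\ref{rmk:unique}); as written, your proposal identifies the right obstacle but does not overcome it, so there is a genuine gap at the central step. What the proof of that lemma actually requires is: (a) a rigidity statement --- since $a_i\ge 6$, the graph $\Gamma_{p,p-q}$ consists of chains of $2$'s of length at least $3$ separated by single weight-$3$ vertices, and each such chain embeds in $(\Z^N,\Id)$ uniquely up to change of basis as $x_i=e_i+e_{i+1}$, while each weight-$3$ vertex is forced to consume exactly one fresh coordinate; (b) the resulting coordinate count $N\ge\sum_k a_k-m+1=m+\rk(\Z\Gamma_{p,p-q})$ and the uniqueness of the embedding; (c) an explicit computation of the orthogonal complement: a vector orthogonal to every chain must have equal coefficients along each chain, and orthogonality to the weight-$3$ vertices pins down a basis $\{x_\ell\}$ of alternating-sign vectors whose Gram matrix one checks equals the plumbing matrix of $\Gamma_{p,q}$. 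Your model case $q=1$ (the complement of $A_{p-1}$ in $\Z^p$) is correct and is a good sanity check, but it does not substitute for steps (a)--(c), and your appeal to ``the spirit of Lisca's and Greene's analyses'' is not a proof.

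A second, smaller gap is the determinant bookkeeping in the ``moreover'' direction, which you flag but do not resolve. The assertion that $\det Q_X=p$ because $X$ is a positive definite filling of $L(p,q)$ is not justified a priori (torsion in $H_1(X)$ is exactly the worry you raise). The correct argument is one-sided: $Q_X$ presents a subgroup of $H_1(L(p,q);\Z)$, so $\det(M_X)\le p$, while the full-rank embedding gives $M_X=P^{\top}M_{p,q}P$ and hence $\det(M_X)=\det(P)^2\,p\ge p$; together these force $\det(P)=\pm1$, so the embedding is an isomorphism. With that substitution, and with the lattice lemma actually proved, your argument closes.
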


\subsection*{Acknowledgments}
This project started when both authors were at the Max-Planck-Institut f\"{u}r Mathematik. We would like to thank the MPIM for providing an excellent environment for research.
 

\section{proof of Theorem $1.1$ and $1.2$}\label{sec:proof}

We work in the smooth category and all manifolds are oriented. Recall that the lens space $L(p,q)$ is the result of $-p/q$ Dehn surgery on the unknot. Up to orientation preserving diffeomorphism, we may assume that $p > q >0$. For the rest of this article we only consider lens spaces $L(p,q)$ that bound the canonical positive definite plumbded manifolds $P(p,q)$ with the plumbing graph $\Gamma_{p,q}:=$
$$
\begin{tikzpicture}[xscale=1.5,yscale=-0.5]
\node (A0_1) at (1, 0) {$a_1$};
\node (A0_2) at (2, 0) {$a_2$};
\node (A0_4) at (4, 0) {$a_m$};

\node (A1_1) at (1, 1) {$\bullet$};
\node (A1_2) at (2, 1) {$\bullet$};
\node (A1_3) at (3, 1) {$\dots\dots$};
\node (A1_4) at (4, 1) {$\bullet$};
\path (A1_2) edge [thick,-] node [auto] {$\scriptstyle{}$} (A1_3);
\path (A1_3) edge [thick,-] node [auto] {$\scriptstyle{}$} (A1_4);
\path (A1_1) edge [thick,-] node [auto] {$\scriptstyle{}$} (A1_2);
\end{tikzpicture}
$$
where $a_i \geq 6 $ for all $i$.

Let $-L(p,q)$ be the lens space $L(p,q)$ with the reversed orientation, then there is an orientation preserving diffeomorphism between $-L(p, q)$ and $L(p, p-q)$. Using Riemenschneider’s point rule \cite{Riemenschneider:1974-1} (see also \cite{Lisca:2007-1,Lecuona:2012-1}), we see that $L(p,p-q)$ bounds a canonical positive definite plumbed manifold $P(p,p-q)$ with the plumbing graph $\Gamma_{p,p-q}:=$
$$
\begin{tikzpicture}[xscale=1.5,yscale=-0.5]
\node (A0_1) at (1, 0) {$2$};

\node (A0_3) at (2, 0) {$2$};
\node (A0_4) at (2.5, 0) {$3$};
\node (A0_5) at (3, 0) {$2$};

\node (A0_7) at (4, 0) {$2$};
\node (A0_8) at (4.5, 0) {$3$};
\node (A0_9) at (5, 0) {$2$};

\node (A0_11) at (6, 0) {$2$};
\node (A0_12) at (6.5, 0) {$3$};
\node (A0_13) at (7, 0) {$2$};

\node (A0_14) at (8, 0) {$2$};
\node (A0_15) at (8.5, 0) {$3$};
\node (A0_16) at (9, 0) {$2$};

\node (A0_16) at (10, 0) {$2$};

\node (A1_1) at (1, 1) {$\bullet$};
\node (A1_2) at (1.5, 1) {$\dots$};
\node (A1_3) at (2, 1) {$\bullet$};
\node (A1_4) at (2.5, 1) {$\bullet$};
\node (A1_5) at (3, 1) {$\bullet$};
\node (A1_6) at (3.5, 1) {$\dots$};
\node (A1_7) at (4, 1) {$\bullet$};
\node (A1_8) at (4.5, 1) {$\bullet$};

\node (A1_9) at (5, 1) {$\bullet$};
\node (A1_10) at (5.5, 1) {$\dots$};
\node (A1_11) at (6, 1) {$\bullet$};
\node (A1_12) at (6.5, 1) {$\bullet$};
\node (A1_13) at (7, 1) {$\bullet$};
\node (A1_14) at (7.5, 1) {$\dots$};
\node (A1_15) at (8, 1) {$\bullet$};
\node (A1_16) at (8.5, 1) {$\bullet$};
\node (A1_17) at (9, 1) {$\bullet$};
\node (A1_18) at (9.5, 1) {$\dots$};
\node (A1_19) at (10, 1) {$\bullet$};

\path (A1_1) edge [thick,-] node [auto] {$\scriptstyle{}$} (A1_2);
\path (A1_2) edge [thick,-] node [auto] {$\scriptstyle{}$} (A1_3);
\path (A1_3) edge [thick,-] node [auto] {$\scriptstyle{}$} (A1_4);
\path (A1_4) edge [thick,-] node [auto] {$\scriptstyle{}$} (A1_5);
\path (A1_5) edge [thick,-]node [auto] {$\scriptstyle{}$} (A1_6);
\path (A1_6) edge [thick,-] node [auto] {$\scriptstyle{}$} (A1_7);
\path (A1_7) edge [thick,-] node [auto] {$\scriptstyle{}$} (A1_8);
\path (A1_8) edge [thick,-] node [auto] {$\scriptstyle{}$} (A1_9);
\path (A1_9) edge [thick,-] node [auto] {$\scriptstyle{}$} (A1_10);
\path (A1_10) edge [thick,-] node [auto] {$\scriptstyle{}$} (A1_11);
\path (A1_11) edge [thick,-] node [auto] {$\scriptstyle{}$} (A1_12);
\path (A1_12) edge [thick,-] node [auto] {$\scriptstyle{}$} (A1_13);
\path (A1_13) edge [thick,-] node [auto] {$\scriptstyle{}$} (A1_14);
\path (A1_14) edge [thick,-] node [auto] {$\scriptstyle{}$} (A1_15);
\path (A1_15) edge [thick,-] node [auto] {$\scriptstyle{}$} (A1_16);
\path (A1_16) edge [thick,-] node [auto] {$\scriptstyle{}$} (A1_17);
\path (A1_17) edge [thick,-] node [auto] {$\scriptstyle{}$} (A1_18);
\path (A1_18) edge [thick,-] node [auto] {$\scriptstyle{}$} (A1_19);

\node (A2_1) at (1.5, 2){$\underbrace{\hphantom{----}}_{a_1-2}$};
\node (A2_2) at (3.5, 2){$\underbrace{\hphantom{----}}_{a_2-3}$};
\node (A2_3) at (7.5, 2){$\underbrace{\hphantom{----}}_{a_{m-1}-3}$};
\node (A2_4) at (9.5, 2){$\underbrace{\hphantom{----}}_{a_{m}-2}$};
\end{tikzpicture}
$$

We denote the integral lattice associated to $P(p,q)$ as $(\Z\Gamma_{p,q},Q_{p,q})$ and call it the \emph{integral lattice associated with} $L(p,q)$. Similarly, we also have a \emph{dual} positive definite integral lattice $(\Z\Gamma_{p,p-q},Q_{p,p-q})$ associated with $L(p,p-q)$.

\begin{proposition}\label{prop:dualembedding} If there is an embedding from $(\Z\Gamma_{p,p-q},Q_{p,p-q})$ to $(\mathbb{Z}^N, \Id)$, then $N\geq m + \rk(\Z\Gamma_{p,p-q})$.
\end{proposition}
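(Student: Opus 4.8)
The plan is to translate the lattice embedding into a statement about $\{0,\pm1\}$-vectors and then reduce the rank bound to a count of standard basis vectors. Write $k=\rk(\Z\Gamma_{p,p-q})$ and let $x_1,\dots,x_k$ be the vertex basis of the dual chain, so that $x_i\cdot x_i=c_i\in\{2,3\}$, $x_i\cdot x_{i+1}=\pm1$, and $x_i\cdot x_j=0$ for $|i-j|\ge2$. Reading off $\Gamma_{p,p-q}$, the two endpoints have weight $2$, no two weight-$3$ vertices are adjacent, and there are exactly $m-1$ vertices of weight $3$; hence $\sum_i c_i=2\bigl(k-(m-1)\bigr)+3(m-1)=2k+(m-1)$. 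An embedding $\phi\colon(\Z\Gamma_{p,p-q},Q_{p,p-q})\hookrightarrow(\Z^N,\Id)$ sends each $x_i$ to a vector $w_i=\phi(x_i)$ with $|w_i|^2=c_i$; since the only vectors of square $2$ and $3$ in the standard lattice are $\pm e_a\pm e_b$ and $\pm e_a\pm e_b\pm e_c$, each $w_i$ has entries in $\{0,\pm1\}$ with exactly $c_i$ of them nonzero. Writing $\operatorname{supp}(w_i)$ for the set of nonzero coordinates and $U=\bigl|\bigcup_i\operatorname{supp}(w_i)\bigr|$ for the number of standard basis vectors that occur, we have $N\ge U$, so it suffices to prove $U\ge k+m$.

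First I would record the local consequences of the inner product constraints. Consecutive images satisfy $w_i\cdot w_{i+1}=\pm1\ne0$, so their supports meet; since one shared coordinate already contributes $\pm1$ and the supports have size at most $3$ with no two weight-$3$ vertices adjacent, consecutive supports meet in \emph{exactly} one coordinate. Non-consecutive images are orthogonal, so any coordinate shared by $w_i$ and $w_j$ with $|i-j|\ge2$ must be balanced by a second shared coordinate of the opposite sign. I would then process the chain from $x_1$ to $x_k$, tracking the excess $d_i:=U_i-i$, where $U_i=\bigl|\bigcup_{j\le i}\operatorname{supp}(w_j)\bigr|$; note $d_{i+1}=d_i+(t_{i+1}-1)$, where $t_{i+1}$ is the number of coordinates of $w_{i+1}$ not already used, and $d_1=1$ since $w_1$ has weight $2$. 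If I can show that every weight-$2$ image introduces at least one new coordinate ($t_{i+1}\ge1$) and every weight-$3$ image introduces at least two ($t_{i+1}\ge2$), then the excess never decreases across a weight-$2$ step and rises by at least one across each of the $m-1$ weight-$3$ steps, whence $d_k\ge 1+(m-1)=m$ and $U=U_k\ge k+m$, as desired.

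The main obstacle is precisely the possibility of coordinate reuse with cancellation, which is exactly what these two lower bounds on $t_{i+1}$ must exclude. Concretely, if a weight-$2$ image $w_{i+1}=\pm e_a\pm e_b$ introduced no new coordinate, then one of $a,b$ is the coordinate shared with $w_i$ while the other lies in $\operatorname{supp}(w_l)$ for some earlier non-consecutive index $l$; orthogonality with $w_l$ then forces $w_l$ to contain \emph{both} $a$ and $b$, and chasing this condition backward along the chain produces a cascade of forced coincidences among the earlier supports. I would show such a cascade is incompatible with the constraints by a finite local analysis, exploiting that all supports have size at most $3$ and that, by the hypothesis $a_i\ge6$, the weight-$3$ vertices are isolated and separated by runs of at least three weight-$2$ vertices: a reused coordinate propagating through a long run of square-$2$ vectors is impossible, because such an $A$-type subchain is rigid and forces a staircase pattern on fresh coordinates. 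The argument ruling out $t_{i+1}\le1$ for a weight-$3$ image is the same in spirit. Carrying out this bookkeeping—rather than the clean count of the previous paragraph—is where the real work lies, and the assumption $a_i\ge6$ is what keeps the local analysis tractable.
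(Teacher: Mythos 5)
Your setup is sound and your counting scheme is consistent with the paper's (the target inequality $U\ge k+m$ and the tally of $m-1$ weight-$3$ vertices are both correct), but the proposal has a genuine gap: the two claims that drive the whole argument --- every weight-$2$ image introduces at least one new coordinate and every weight-$3$ image introduces at least two --- are asserted, not proved. You say yourself that ``carrying out this bookkeeping\dots is where the real work lies,'' and that is precisely the content of the proposition; what you have written is a reduction of the statement to two unproved lemmas. Moreover these lemmas are \emph{not} local facts about a vertex and its neighbours: a chain of three squares-$2$ vectors embeds in $\mathbb{Z}^3$ as $e_2-e_3,\ e_1-e_2,\ e_2+e_3$ (the $D_3$ root basis read as a path), and there the last weight-$2$ vector introduces \emph{zero} new coordinates. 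So any proof of $t_{i+1}\ge 1$ must use global information about all the earlier images, not just a ``cascade of forced coincidences'' resolved by finite local analysis near step $i+1$.

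The missing ingredient is exactly what the paper's proof supplies: the \emph{rigidity} (uniqueness up to signed permutation of the basis) of the embedding of a chain of $2$'s of length greater than $3$, which pins down the images as a staircase $x_i=e_i+e_{i+1}$, after which orthogonality forces each weight-$3$ vertex to take the form $e_{a_1-1}+(\text{two fresh coordinates})$ and allows the argument to restart inductively on the remaining subgraph. Once the embedding is determined explicitly, the coordinate count $\sum_k a_k-m+1=m+\rk(\Z\Gamma_{p,p-q})$ is immediate. If you fleshed out your two lemmas you would in effect be re-deriving this rigidity step by step along the chain, so the amortized-excess bookkeeping $d_i=U_i-i$ adds a layer of machinery without avoiding the real work. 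To repair the proposal, replace the ``finite local analysis'' by the uniqueness statement for long chains of $2$'s (this is where the hypothesis $a_i\ge 6$ enters, guaranteeing each run of $2$'s is long enough to be rigid) and propagate it across the weight-$3$ vertices as in the paper.
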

\begin{proof}We label the first $a_1$ vertices of $\Gamma_{p,p-q}$ as follows

$$
\begin{tikzpicture}[xscale=1.5,yscale=-0.5]
\node (A0_1) at (1, 0) {$2$};
\node (A0_3) at (3, 0) {$2$};
\node (A0_4) at (4, 0) {$3$};
\node (A0_5) at (5, 0) {$2$};

\node (A1_1) at (1, 1) {$\bullet$};
\node (A1_2) at (2, 1) {$\dots\dots$};
\node (A1_3) at (3, 1) {$\bullet$};
\node (A1_4) at (4, 1) {$\bullet$};
\node (A1_5) at (5, 1) {$\bullet$};

\node (A2_1) at (1, 2) {$x_1$};
\node (A2_3) at (3, 2) {$x_{a_1-2}$};
\node (A2_4) at (4, 2) {$x_{a_1-1}$};
\node (A2_5) at (5, 2) {$x_{a_1}$};

\path (A1_1) edge [thick,-] node [auto] {$\scriptstyle{}$} (A1_2);
\path (A1_2) edge [thick,-] node [auto] {$\scriptstyle{}$} (A1_3);
\path (A1_3) edge [thick,-] node [auto] {$\scriptstyle{}$} (A1_4);
\path (A1_4) edge [thick,-] node [auto] {$\scriptstyle{}$} (A1_5);
\end{tikzpicture}
$$
Let $\{e_1,\ldots, e_N\}$ be the standard basis for $(\mathbb{Z}^N, \Id)$. By abuse of notation we identify $(\Z\Gamma_{p,p-q},Q_{p,p-q})$ with its image in the standard lattice. It is straightforward to see that a chain of $2$'s with length longer than $3$ has a unique embedding. Hence up to reordering and changing sign of the standard basis elements we may write $$x_i = e_i + e_{i+1} \text{ for } 1\leq i \leq a_1-2.$$ Further, since $x_{a_1-1}$ intersects with $x_{a_1-2}$ once and has norm $3$, $$x_{a_1-1} = e_{a_1-1} + {\sum\limits_{a_1-1<j} c_j e_j}.$$ Lastly, $x_{a_1}$ has a trivial intersection with $e_{a_1-1}$ since it is disjoint from the first chain of $2$'s and it has norm $2$. Therefore, $x_{a_1-1} - e_{a_1-1}$ is disjoint from the first chain of $2$'s and intersects $x_{a_1}$ once. Now, if we only consider $x_{a_1-1} - e_{a_1-1}$ and all the vertices that reside on the right hand side of $x_{a_1-1}$, we get to the same situation as we have started with. Hence we can repeat the same argument to get the following identifications
$$
\begin{tikzpicture}[xscale=1.5,yscale=-0.5]
\node (A0_1) at (1, 0) {$2$};
\node (A0_2) at (3, 0) {$2$};
\node (A0_4) at (7, 0) {$2$};

\node (A1_1) at (1, 1) {$\bullet$};
\node (A1_2) at (3, 1) {$\bullet$};
\node (A1_3) at (5, 1) {$\dots\dots\dots\dots$};
\node (A1_4) at (7, 1) {$\bullet$};

\node (A2_1) at (1, 2) {$e_{n_\ell}+e_{n_\ell+1}$};
\node (A2_2) at (3, 2) {$e_{n_\ell+1}+e_{n_\ell+2}$};
\node (A2_4) at (7, 2) {$e_{n_\ell+a_\ell-4}+e_{n_\ell+a_\ell-3}$};

\path (A1_2) edge [thick,-] node [auto] {$\scriptstyle{}$} (A1_3);
\path (A1_3) edge [thick,-] node [auto] {$\scriptstyle{}$} (A1_4);
\path (A1_1) edge [thick,-] node [auto] {$\scriptstyle{}$} (A1_2);
\end{tikzpicture}
$$
for each chain of $2$'s where $n_\ell = \sum\limits_{k=1}^{\ell-1} a_k + (3-\ell)$ and $2\leq \ell \leq m-1$, and
$$
\begin{tikzpicture}[xscale=1.5,yscale=-0.5]
\node (A0_1) at (1, 0) {$2$};
\node (A0_2) at (3, 0) {$2$};
\node (A0_4) at (7, 0) {$2$};

\node (A1_1) at (1, 1) {$\bullet$};
\node (A1_2) at (3, 1) {$\bullet$};
\node (A1_3) at (5, 1) {$\dots\dots\dots\dots$};
\node (A1_4) at (7, 1) {$\bullet$};

\node (A2_1) at (1, 2) {$e_{n_m}+e_{n_m+1}$};
\node (A2_2) at (3, 2) {$e_{n_m+1}+e_{n_m+2}$};
\node (A2_4) at (7, 2) {$e_{n_m+a_m-3}+e_{n_m+a_m-2}$};

\path (A1_2) edge [thick,-] node [auto] {$\scriptstyle{}$} (A1_3);
\path (A1_3) edge [thick,-] node [auto] {$\scriptstyle{}$} (A1_4);
\path (A1_1) edge [thick,-] node [auto] {$\scriptstyle{}$} (A1_2);
\end{tikzpicture}
$$ for the $m$-th chain of $2$'s where $n_m = \sum\limits_{k=1}^{m-1} a_k + (3-m)$. The $\ell$-th vertex with weight $3$ shares one coordinate from its left chain and one coordinate from its right chain. Further, it needs an extra coordinate, which we denote it by $e_{n_{\ell+1}-1}$. 

In total, we have used $\sum\limits_{k=1}^{m} a_k - m +1$ coordinates which implies that $N\geq \sum\limits_{k=1}^{m} a_k - m +1 $. The proof is complete by observing that $m + \rk(\Z\Gamma_{p,p-q}) = \sum\limits_{k=1}^{m} a_k - m +1$.\end{proof}

\begin{remark}\label{rmk:unique} In fact, the proof of Proposition~\ref{prop:dualembedding} shows that there is a \emph{unique} embedding up to change of basis from $(\Z\Gamma_{p,p-q},Q_{p,p-q})$ to $(\mathbb{Z}^N, \Id)$ when $N \geq m + \rk(\Z\Gamma_{p,p-q})$.
\end{remark}

\begin{proposition}\label{prop:lowerbound} If $L(p,q)$ bounds a positive definite $4$-manifold $X$, then $b_2(X)\geq m$.\end{proposition}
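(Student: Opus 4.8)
The plan is to cap off $X$ with the dual plumbing to produce a closed positive definite $4$-manifold, apply Donaldson's diagonalization theorem, and then invoke Proposition~\ref{prop:dualembedding}. Recall that $P(p,p-q)$ is positive definite with $\partial P(p,p-q) = L(p,p-q) = -L(p,q)$, while $X$ is positive definite with $\partial X = L(p,q)$. Since the two boundaries are orientation-reversing copies of one another, I would glue them to form the smooth closed oriented $4$-manifold
$$
Z := X \cup_{L(p,q)} P(p,p-q).
$$

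First I would check that $Z$ is positive definite. Because $L(p,q)$ is a rational homology sphere, the Mayer--Vietoris sequence with $\Q$-coefficients gives $H_2(Z;\Q) \cong H_2(X;\Q) \oplus H_2(P(p,p-q);\Q)$, so $b_2(Z) = b_2(X) + b_2(P(p,p-q))$. By Novikov additivity, $\sigma(Z) = \sigma(X) + \sigma(P(p,p-q)) = b_2(X) + b_2(P(p,p-q)) = b_2(Z)$, and since the intersection form of a closed oriented $4$-manifold is unimodular of rank $b_2$, the equality $\sigma(Z) = b_2(Z)$ forces $Q_Z$ to be positive definite. Donaldson's theorem then yields an isomorphism $(H_2(Z;\Z)/\tors, Q_Z) \cong (\Z^N, \Id)$ with $N = b_2(Z)$.

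Next I would exploit the inclusion $P(p,p-q) \hookrightarrow Z$. Again because $L(p,q)$ is a rational homology sphere, the induced map $H_2(P(p,p-q);\Q) \to H_2(Z;\Q)$ is injective, hence so is $H_2(P(p,p-q);\Z)/\tors \to H_2(Z;\Z)/\tors$; and since intersection numbers of classes carried by $P(p,p-q)$ can be computed by transverse intersections taking place inside $P(p,p-q)$, this map preserves the forms. As $P(p,p-q)$ is definite, its lattice $(\Z\Gamma_{p,p-q}, Q_{p,p-q})$ is nondegenerate and maps isometrically onto a sublattice, producing an embedding $(\Z\Gamma_{p,p-q}, Q_{p,p-q}) \hookrightarrow (\Z^N, \Id)$. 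Proposition~\ref{prop:dualembedding} now gives $N \geq m + \rk(\Z\Gamma_{p,p-q})$. Since $\rk(\Z\Gamma_{p,p-q}) = b_2(P(p,p-q))$ and $N = b_2(X) + b_2(P(p,p-q))$, these combine to yield $b_2(X) \geq m$.

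The individual verifications---the Mayer--Vietoris computation, Novikov additivity, and the fact that the inclusion preserves intersection forms---are standard. The only point requiring genuine care is confirming that $Z$ is actually positive definite rather than merely of nonnegative signature: this is where unimodularity of the intersection form of the closed manifold $Z$, together with the exact rank count $b_2(Z) = b_2(X) + b_2(P(p,p-q))$, is essential, since this single equality both certifies definiteness (licensing Donaldson's theorem) and supplies the rank bookkeeping that closes out the argument.
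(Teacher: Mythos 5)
Your proposal is correct and follows essentially the same route as the paper: glue $X$ to the dual plumbing $P(p,p-q)$ along $L(p,q)$, apply Donaldson's diagonalization to the resulting closed positive definite manifold, and feed the induced embedding of $(\Z\Gamma_{p,p-q},Q_{p,p-q})$ into Proposition~\ref{prop:dualembedding} together with the rank count $b_2(W)=b_2(X)+\rk(\Z\Gamma_{p,p-q})$. The paper states these steps more tersely; your verifications of definiteness via Novikov additivity and of the form-preserving inclusion are the standard details it leaves implicit.
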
 
\begin{proof} Let $W$ be the closed $4$-manifold obtained by gluing $X$ with $P(p,p-q)$ along $L(p,q)$. 
We obtain the following embedding 
$$
(H_2(X;\mathbb{Z})/\tors,Q_X) \oplus (\Z\Gamma_{p,p-q},Q_{p,p-q}) \hookrightarrow (H_2(W;\mathbb{Z})/\tors,Q_W).
$$ Further, by Donaldson's diagonalization theorem~\cite{Donaldson:1987-1} we have $$(H_2(W;\mathbb{Z})/\tors,Q_W) \cong(\mathbb{Z}^{b_2(W)}, \Id).$$ Combining Proposition~\ref{prop:dualembedding} with $b_2(W)=b_2(X) +\rk(\Z\Gamma_{p,p-q})$ completes the proof.
\end{proof}

\begin{proof}[Proof of Theorem~\ref{thm:main}] Suppose $L(p,q)$ smoothly embeds in a definite $4$-manifold $W$. Since $L(p,q)$ embeds in $W$ if and only if $L(p,q)$ embeds is $-W$, we may assume that $W$ is positive definite. Then $L(p,q)$ separates $W$ into two positive definite components, the closures of which we denote by $X_1$ and $X_2$. Note that by the Mayer–Vietoris sequence we have $b_2(X_1)+b_2(X_2) = b_2(W)$ and the result follows from Proposition~\ref{prop:lowerbound} and the fact that $L(p,q)$ does not bound a rational ball (see \cite{Lisca:2007-1}).
\end{proof}

The rest of the section is devoted to proving Theorem~\ref{thm:2}. Suppose there is an embedding of an integral lattice $\phi\colon \Gamma\hookrightarrow \Gamma'$. Then the orthogonal complement of $\Gamma$ in $\Gamma'$ with respect to $\phi$ is defined as follows, $$ \Gamma^\perp_\phi:= \{x \in \Gamma' \mid x\cdot \phi(y) =0 \text{ for all } y \in \Gamma\}.$$

\begin{proposition}~\label{prop:ortho} Suppose there is an embedding $$\phi\colon (\Z\Gamma_{p,p-q},Q_{p,p-q})\hookrightarrow (\mathbb{Z}^N, \Id),$$
then $(\Z\Gamma_{p,p-q},Q_{p,p-q})^\perp_\phi \cong (\Z\Gamma_{p,q},Q_{p,q}) \oplus(\mathbb{Z}^{N-m-\rk\left(\Z\Gamma_{p,p-q}\right)}, \Id)$. In particular, if $N=m + \rk(\Z\Gamma_{p,p-q})$, then $(\Z\Gamma_{p,p-q},Q_{p,p-q})^\perp_\phi \cong (\Z\Gamma_{p,q},Q_{p,q})$.
\end{proposition}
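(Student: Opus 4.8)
The plan is to compute $(\Z\Gamma_{p,p-q},Q_{p,p-q})^\perp_\phi$ directly from the explicit embedding built in Proposition~\ref{prop:dualembedding}. Since the existence of $\phi$ forces $N\geq m+\rk(\Z\Gamma_{p,p-q})$ by that proposition, Remark~\ref{rmk:unique} applies: $\phi$ is unique up to a change of basis of $(\mathbb{Z}^N,\Id)$, i.e.\ up to a signed permutation of the $e_j$. Such an isometry carries $\Gamma^\perp_\phi$ isomorphically onto the complement of the transformed embedding, so the isomorphism type of the complement does not depend on the choice, and I may assume $\phi$ is the concrete embedding from the proof of Proposition~\ref{prop:dualembedding}. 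That embedding touches only $m+\rk(\Z\Gamma_{p,p-q})$ of the standard basis vectors; let $V$ be the coordinate sublattice they span and $V'$ the span of the remaining $N-m-\rk(\Z\Gamma_{p,p-q})$ vectors. Then $(\mathbb{Z}^N,\Id)=V\perp V'$ with $\phi(\Z\Gamma_{p,p-q})\subseteq V$, so $\Gamma^\perp_\phi$ splits as the orthogonal complement of $\phi(\Z\Gamma_{p,p-q})$ inside $V$ together with $V'\cong(\mathbb{Z}^{N-m-\rk(\Z\Gamma_{p,p-q})},\Id)$. Everything thus reduces to identifying the complement inside $V$.

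Next I would solve the orthogonality relations coordinate by coordinate. Write a candidate element as $y=\sum_j d_j e_j\in V$. Each chain of $2$'s contributes generators of the form $e_j+e_{j+1}$, so orthogonality gives $d_j+d_{j+1}=0$ and the coordinates along the chain alternate in sign; hence the entire chain is determined by the value $t_\ell$ of $y$ on the leftmost coordinate of the $\ell$-th chain. Each weight-$3$ vertex $w_\ell$ ($1\leq\ell\leq m-1$) equals the sum of the last coordinate of chain $\ell$, the first coordinate of chain $\ell+1$, and one auxiliary coordinate used by $w_\ell$ alone; its orthogonality relation then determines the value of $y$ on that auxiliary coordinate in terms of $t_\ell$ and $t_{\ell+1}$, and imposes no condition on the $t_\ell$ themselves. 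As the auxiliary coordinates occur in no other relation, the assignment $y\mapsto(t_1,\dots,t_m)$ is a group isomorphism from the complement onto $\mathbb{Z}^m$; in particular the complement is free of rank $m=\rk(\Z\Gamma_{p,q})$.

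Finally I would compute the form. Writing $\epsilon_\ell=(-1)^{s_\ell}$ for the parity of the length $s_\ell$ of the $\ell$-th chain, the value of $y$ on the $\ell$-th auxiliary coordinate works out to $-(\epsilon_\ell t_\ell+t_{\ell+1})$, and summing the squares of all coordinates gives
$$
y\cdot y=\sum_{\ell=1}^m a_\ell\,t_\ell^2+2\sum_{\ell=1}^{m-1}\epsilon_\ell\,t_\ell t_{\ell+1}.
$$
Here each diagonal coefficient collapses to exactly $a_\ell$ once the $s_\ell+1$ chain coordinates are combined with the contributions of the neighbouring auxiliary coordinate(s). This is the plumbing form of the linear graph $\Gamma_{p,q}$ except that the off-diagonal entries are $\epsilon_\ell=\pm1$ in place of $+1$; since the diagonal is untouched, the diagonal change of basis $t_\ell\mapsto\sigma_\ell t_\ell$ with $\sigma_1=1$ and $\sigma_{\ell+1}=\sigma_\ell\epsilon_\ell$ turns every off-diagonal entry into $+1$, yielding an isomorphism with $(\Z\Gamma_{p,q},Q_{p,q})$. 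I expect the only delicate points to be the sign bookkeeping of the $\epsilon_\ell$ and the verification that the diagonal entries reduce precisely to $a_\ell$; the remainder is routine linear algebra over $\mathbb{Z}$. The final assertion, for $N=m+\rk(\Z\Gamma_{p,p-q})$, is then the special case in which $V'=0$.
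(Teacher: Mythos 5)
Your proposal is correct, and its computational core coincides with the paper's: both reduce, via the uniqueness of the embedding (Remark~\ref{rmk:unique}), to the explicit embedding of Proposition~\ref{prop:dualembedding}, split off the untouched coordinates as a standard summand, and then solve the orthogonality relations to exhibit a rank-$m$ basis of the complement whose Gram matrix is the linear plumbing form with weights $a_1,\dots,a_m$. Where you genuinely diverge is that the paper routes the identification through geometry: it glues $P(p,q)$ to $P(p,p-q)$ along $L(p,q)$, applies Donaldson's theorem to the resulting closed positive definite manifold to obtain an embedding $\psi$ of $(\Z\Gamma_{p,q},Q_{p,q})\oplus(\Z\Gamma_{p,p-q},Q_{p,p-q})$ into the standard lattice, and uses the restriction $\psi_{p,q}$ together with a determinant/unimodularity argument to see that the complement is a copy of $(\Z\Gamma_{p,q},Q_{p,q})$. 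You dispense with this entirely and argue purely lattice-theoretically, which is leaner and self-contained, but places all the weight on the explicit verification that the induced form is $\sum a_\ell t_\ell^2 + 2\sum\epsilon_\ell t_\ell t_{\ell+1}$; your check of that formula (the diagonal entries collapsing to $a_\ell$, the off-diagonal signs $\epsilon_\ell=\pm1$ removed by the diagonal substitution $t_\ell\mapsto\sigma_\ell t_\ell$) is correct and in fact more careful than the paper's, which asserts that the Gram matrix of its basis $\{x_\ell\}$ ``coincides'' with $M_{p,q}$ while eliding exactly this sign normalization (and which writes $c_1=\cdots=c_{a_1-1}$ where the coordinates along a chain actually alternate in sign, as your solution and the paper's own basis elements both show). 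The paper's geometric detour buys an a priori reason that $(\Z\Gamma_{p,q},Q_{p,q})$ embeds in the complement; your version buys brevity and avoids a second appeal to Donaldson's theorem.
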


\begin{proof} 
From Proposition~\ref{prop:dualembedding} and Remark~\ref{rmk:unique}, we know that there is a unique embedding up to change of basis from $(\Z\Gamma_{p,p-q},Q_{p,p-q})$ to $(\mathbb{Z}^N, \Id)$. Hence we may decompose $\phi$ as follows
$$\phi\colon (\Z\Gamma_{p,p-q},Q_{p,p-q})\hookrightarrow (\mathbb{Z}^{m+\rk\left(\Z\Gamma_{p,p-q}\right)}, \Id)\oplus(\mathbb{Z}^{N-m-\rk\left(\Z\Gamma_{p,p-q}\right)}, \Id),$$ where the image of $\phi$ on the second summand is trivial. Let $\pi$ be the projection map from $(\mathbb{Z}^N, \Id)$ to $(\mathbb{Z}^{m+\rk\left(\Z\Gamma_{p,p-q}\right)}, \Id)$, then we have the following identification \begin{equation}\label{eqn:first}
(\Z\Gamma_{p,p-q},Q_{p,p-q})^\perp_{\phi} \cong (\Z\Gamma_{p,p-q},Q_{p,p-q})^\perp_{\pi \circ \phi} \oplus (\mathbb{Z}^{N-m-\rk\left(\Z\Gamma_{p,p-q}\right)}, \Id).
\end{equation}

Let $W$ be the closed $4$-manifold obtained by gluing $P(p,q)$ with $P(p,p-q)$ along $L(p,q)$. Using Donaldson's diagonalization theorem~\cite{Donaldson:1987-1}, we have an embedding 
$$\psi\colon
(\Z\Gamma_{p,q},Q_{p,q}) \oplus (\Z\Gamma_{p,p-q},Q_{p,p-q}) \hookrightarrow (\mathbb{Z}^{m+\rk\left(\Z\Gamma_{p,p-q}\right)}, \Id).
$$ Again, since there is a unique embedding up to change of basis from $(\Z\Gamma_{p,p-q},Q_{p,p-q})$ to $(\mathbb{Z}^{m+\rk\left(\Z\Gamma_{p,p-q}\right)}, \Id)$, we may assume that the embedding $\psi$ restricted to $(\Z\Gamma_{p,p-q},Q_{p,p-q})$, denoted by $\psi_{p,p-q}$, coincides with $\pi \circ \phi$. In particular, 
\begin{equation}\label{eqn:second}(\Z\Gamma_{p,p-q},Q_{p,p-q})^\perp_{\pi \circ \phi} \cong (\Z\Gamma_{p,p-q},Q_{p,p-q})^\perp_{\psi_{p,p-q}}.
\end{equation} Further, we can use the coordinates from the proof of Proposition~\ref{prop:dualembedding}. 

By restricting $\psi$ to $\Z\Gamma_{p,q}$, denoted by $\psi_{p,q}$, we have $$\psi_{p,q}\colon(\Z\Gamma_{p,q},Q_{p,q}) \hookrightarrow (\Z\Gamma_{p,p-q},Q_{p,p-q})^\perp_{\psi_{p,p-q}}.$$ Now, suppose $x= \sum\limits_{i=1}^{N} c_i e_i  \in (\Z\Gamma_{p,p-q},Q_{p,p-q})^\perp_{\psi_{p,p-q}}$. Since $x$ needs to have trivial intersections with all the chain of $2$'s, we have 
$$c_{1} = \cdots = c_{a_1-1}, c_{n_\ell} = \cdots = c_{n_\ell+a_\ell-3}, \text{ for } 2 \leq \ell \leq m-1, \text{ and } c_{n_m} = \cdots = c_{n_m+a_m-2}$$
where $n_\ell = \sum\limits_{\ell=1}^{i-1} a_\ell + (3-\ell)$ for $2\leq \ell \leq m$. Further, $x$ has trivial intersections with vertices with weight $3$. This implies $$c_{n_{\ell+1}-2}+c_{n_{\ell+1}-1}+c_{n_{\ell+1}}=0, \text{ for } 1\leq \ell \leq m-1.$$ From the above relations, we see that $\{x_\ell\}$ forms a basis for $(\Z\Gamma_{p,p-q},Q_{p,p-q})^\perp_{\psi_{p,p-q}}$, where\begin{align*} 
x_1 &=  e_1 - e_2 + \cdots + (-1)^{a_1}e_{{n_2}-1}, \\ 
x_\ell &=  e_{n_\ell-1} - e_{n_\ell} + \cdots + (-1)^{a_\ell}e_{{n_{\ell+1}}-1}, \text{ for } 2\leq \ell \leq m-1, \\
x_m &=  e_{n_m-1} - e_{n_m} + \cdots + (-1)^{a_m}e_{n_m+a_m-2}. 
\end{align*}

Finally, it is straightforward to check that the matrix, denoted by $M$, that represents the intersection form of $(\Z\Gamma_{p,p-q},Q_{p,p-q})^\perp_{\psi_{p,p-q}}$ with respect to the basis $\{x_\ell\}$ coincides with the matrix, denoted by $M_{p,q}$, that represents $Q_{p,q}$ with respect to the obvious basis for $\Z\Gamma_{p,q}$. Note that we have $M=P^{\top}M_{p,q}P$ where $P$ is a matrix that represents $\psi_{p,q}$. This implies that $P$ is unimodular and $\psi_{p,q}$ is an isomorphism. Then the result follows from \eqref{eqn:first} and \eqref{eqn:second}. \end{proof}

%
%
Proposition~\ref{prop:ortho} is motivated by \cite[Proposition 4.1]{Aceto-Celoria-Park:2018-1}. By restricting to a smaller family of lens spaces, Proposition~\ref{prop:ortho} gives the same conclusion as \cite[Proposition 4.1]{Aceto-Celoria-Park:2018-1} with a weaker assumption. We are now ready to prove Theorem~\ref{thm:2}.
\begin{proof}[Proof of Theorem~\ref{thm:2}]Let $W$ be the closed $4$-manifold resulting from gluing $X$ and $P(p,p-q)$ along $L(p,q)$. Again, by Donaldson's theorem~\cite{Donaldson:1987-1} we have an embedding 
$$\psi\colon
(H_2(X;\mathbb{Z})/\tors,Q_X) \oplus (\Z\Gamma_{p,p-q},Q_{p,p-q}) \hookrightarrow (\mathbb{Z}^{b_2(X)+\rk(\Z\Gamma_{p,p-q})}, \Id).
$$ Let $\psi_{p,p-q}$ be the embedding obtained by restricting $\psi$ to $\Z\Gamma_{p,p-q}$. Further, by restricting $\psi$ to $H_2(X;\mathbb{Z})/\tors$ we have the following embedding $$\psi|_{H_2(X;\mathbb{Z})/\tors}\colon
(H_2(X;\mathbb{Z})/\tors,Q_X)\hookrightarrow (\Z\Gamma_{p,p-q},Q_{p,p-q})^\perp_{\psi_{p,p-q}}.$$ Then the first part of the theorem follows from Proposition~\ref{prop:lowerbound} and Proposition~\ref{prop:ortho}.

Suppose now that $m=b_2(X)$, then by Proposition~\ref{prop:ortho} we have $$(\Z\Gamma_{p,p-q},Q_{p,p-q})^\perp_{\psi_{p,p-q}} \cong (\Z\Gamma_{p,q},Q_{p,q}).$$ Let $M_X$ and $M_{p,q}$ be matrices that represent $Q_X$ and $Q_{p,q}$, respectively. Then $M_X=P^{\top}M_{p,q}P$ where $P$ is a matrix that represents $\psi|_{H_2(X;\mathbb{Z})/\tors}$. Recall that $M_X$ presents a subgroup of $H_1(L(p,q);\mathbb{Z})$ (see, for instance, \cite[Section~2]{Owens-Strle:2006-1}) and $M_{p,q}$ presents $H_1(L(p,q);\mathbb{Z})$. In particular, $\det(M_X) \leq p$ and $\det(M_{p,q}) = p$, which implies that $P$ is unimodular and concludes the proof.\end{proof}

\bibliographystyle{alpha}
\def\MR#1{}
\bibliography{bib}
\end{document}